\documentclass[a4paper,11pt]{article}
\usepackage[T2A]{fontenc}
\usepackage[cp1251]{inputenc}
\usepackage[ukrainian, english]{babel}
\usepackage{amssymb,amsfonts,amsthm,amsmath,mathtext,cite,enumerate,float}
\usepackage[onehalfspacing]{setspace}
\usepackage{geometry}
\usepackage{graphicx}
\usepackage{multirow}
\usepackage{indentfirst}
\usepackage{color}




\newtheorem{theorem}{Theorem}

\newtheorem{corollary}[theorem]{Corollary}
\newtheorem{lemma}{Lemma}
\newtheorem{example}{Example}

\newtheorem{proper}{Property}
\theoremstyle{definition}


\selectlanguage{english}
\begin{document}
\title{THE INVESTIGATION OF EULER'S TOTIENT FUNCTION  PREIMAGES }
\author{ Ruslan Skuratovskii \\ ruslan@unicyb.kiev.ua, ruslcomp@mail.ru \\ Kiev, professor of MAUP, faculty of computer sciences} 

\date{\empty}
\maketitle


 \section*{Abstract}
We propose a lower bound for computing quantity of the inverses of Euler's function. We answer the question about the  multiplicity of $m$ in the equation $\varphi(x) = m$ \cite{Ford}. An analytic expression for exact multiplicity of $m = {{2}^{{{2}^{n}}+a}}$, where $a\in N$, $a<{{2}^{n}}$, $\varphi(t)={{2}^{{{2}^{n}}+a}}$ was obtained. A lower bound of inverses number for arbitrary $m$ was found. We make an approach to Sierpinski assertion from new side. New numerical metric was proposed.\\
\textbf{Key words}: Euler's totient function, inverses of Euler's function, numerical metric, Fermat Numbers, Chen’s Theorem, number of preimages of Euler’s totient function, Sierpinski assertion.

 \begin{section}{Introduction}
The purpose of  this  work is to study theoretical numerical properties  a  multivalued  function \cite{Luca, Rodney } which is inversed to Euler's function, show the relevance of the examples.

Subject of study: explore the composition of the function $\varphi \left( n \right)$ with itself and the tasks associated with it, it's properties,  the number of preimages of the function $\varphi \left( n \right)$, behavior of the straight $O{{A}_{n}}$, where ${{A}_{n}}\left( n;\varphi \left( n \right) \right)$ and $O\left( 0;0 \right)$ where $n\to \infty $.

We propose a lower estimation for computing quantity of the inverses of Euler's function.
Our approach can be further adapted for computing certain functions of the inverses, such as their quantity, the larger.

Of fundamental importance in the theory of numbers is Euler's totient function $\varphi (n)$. Two famous unsolved problems concern the possible values of the function $A(m)$, the number of solutions of $\varphi(x) = m$, also called the multiplicity of $m$.

Of big importance in the cryptography has number of preimages of Euler's totient function $\varphi (n), \ n=pq$. Because it determines cardinal of secret key space in $RSA$ \cite{RSA}.

 \end{section}

 \begin{section}{ Number of values of $\varphi^{-1}(m)$ for special classes of numbers}

We propose a exact formula for computing quantity of the inverses of Euler's function for any number of form $2^s$.

An old conjecture of Sierpinski asserts that for every integer $k > 2$, there is a number m for which the equation  $\varphi (t)=m$  has exactly $k$  solutions the number of solutions $t$  of $\varphi (t)=m$, also called the multiplicity of $m$. In this section we find multiplicity for numbers of form $2^s$.

\begin{example} The set of preimages for $12$ is following: ${{\phi }^{-1}}(12)\text{ }=\text{ }\{13,21,26,28,36,42\}$. Also we have    ${{\varphi }^{-1}}\left( 16 \right)=\{32,\text{ }48,\text{ }17,\text{ }34,\text{ }40,\text{ }60\}$,
 ${{\varphi }^{-1}}\left( 18 \right)=\{19,\text{ }27,\text{ }38,\text{ }54\}$.
 We remind, that the number of a form ${{2}^{{{2}^{n}}}}+1$, where $n$ is not-negative integer,  is called Fermat number.
\end{example}

Also the recursive formula for Fermat numbers \cite{Luca, SkVor, Sk, Vinogradov}  was used: ${{F}_{n}}={{F}_{0}}...{{F}_{n-1}}+2$.
Useful for the study of the number of prototypes is Lucas's Theorem:  each prime divisor of the Fermat number  $F_n $, where $n>1$,  has a form of  $k{{2}^{n+2}}+1$.

\begin{lemma}
If ${{2}^{m}}+1$ is prime, then $m={{2}^{n}}$.
\end{lemma}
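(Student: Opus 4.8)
The plan is to prove the contrapositive statement in the standard way: if $m$ has an odd divisor greater than $1$, then $2^m+1$ is composite. So suppose $m = ab$ with $b > 1$ odd, and write $2^m + 1 = (2^a)^b + 1$. The key algebraic fact I would invoke is the factorization of $x^b + 1$ for odd $b$, namely
\[
x^b + 1 = (x+1)\bigl(x^{b-1} - x^{b-2} + \cdots - x + 1\bigr),
\]
which holds precisely because $-1$ is a root of $x^b + 1$ when $b$ is odd. Substituting $x = 2^a$ gives a factorization of $2^m+1$ with first factor $2^a + 1$.

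The next step is to check that this factorization is nontrivial, i.e. that neither factor equals $1$ or $2^m+1$. Since $b > 1$, the first factor $2^a + 1$ satisfies $1 < 2^a + 1 < 2^{ab} + 1 = 2^m + 1$, and the complementary factor is then strictly between $1$ and $2^m+1$ as well. Hence $2^m+1$ is composite, contradicting primality. Therefore $m$ has no odd divisor exceeding $1$, which forces $m$ to be a power of $2$, say $m = 2^n$ for some nonnegative integer $n$. (One should note the degenerate case $m=0$: if the lemma implicitly assumes $m \geq 1$, then $m=1=2^0$ is allowed and consistent; I would state the hypothesis as $m$ a positive integer.)

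I do not expect any serious obstacle here — the only thing to be careful about is the bookkeeping on the range of $a$ and $b$ to make sure the factorization is genuinely nontrivial, and making explicit that "no odd divisor $>1$" is equivalent to "a power of $2$." The result is classical (it is the standard fact underlying the definition of Fermat primes), and it will be used together with Lucas's Theorem quoted above in the subsequent analysis of preimages of $\varphi$ at powers of $2$.
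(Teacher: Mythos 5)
Your proposal is correct and follows essentially the same route as the paper's proof: factor $2^m+1=(2^a)^b+1=(2^a+1)\bigl((2^a)^{b-1}-\cdots+1\bigr)$ using an odd divisor $b>1$ of $m$ and check both factors exceed $1$, so primality forces $m$ to be a power of $2$. Your version is in fact slightly more careful than the paper's (explicit nontriviality check and the remark that an odd exponent divisor, not just an odd prime, suffices), but the underlying argument is identical.
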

\begin{proof}
We will prove by contradiction. Suppose there exists a number of a form  ${{2}^{m}}+1$ which is not prime and  $m$ is divisible by $p\ne 2$. Since $p$  is prime and it is not 2, it must be odd. Let $m=pt$, so we can rewrite our number like this: ${{2}^{m}}+1={{\left( {{2}^{t}} \right)}^{p}}+{{\left( 1 \right)}^{p}}=\left( {{2}^{t}}+1 \right)\left( {{\left( {{2}^{t}} \right)}^{p-1}}-...+{{\left( 1 \right)}^{p-1}} \right)$. Expressions in both brackets are grater than 1, but our number is supposed to be prime. Contradiction.
\end{proof}
We make of use Theorem about mutually coprimality of non-prime Fermat number \cite{Vinogradov}.

\begin{theorem}  Let $n\in N \cup\{ 0 \}$. If ${{2}^{{{2}^{n}}}}+1$ is not prime, then for any number of the form ${{2}^{{{2}^{n}}+a}}$, where $a\in N$, $a<{{2}^{n}}$, there exists exactly ${{2}^{t}}$ natural numbers $m$ such that $\varphi \left( m \right)={{2}^{{{2}^{n+a}}}}$, where  $t$ is amount of  prime Fermat numbers, which are less than ${{2}^{{{2}^{n}}}}+1$.
\end{theorem}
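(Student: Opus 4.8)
The plan is to translate the equation $\varphi(m)=2^{2^{n}+a}$ into a purely combinatorial count over subsets of the set of small Fermat primes. First I would recall, and re-derive from Lemma~1, the shape of any $m$ with $\varphi(m)$ equal to a power of two. Writing $m=2^{e_0}p_1^{e_1}\cdots p_r^{e_r}$ with distinct odd primes $p_i$, multiplicativity gives $\varphi(m)=\varphi(2^{e_0})\prod_{i=1}^{r}p_i^{e_i-1}(p_i-1)$; for this product to be a power of two, each $p_i^{e_i-1}$ must be odd (so $e_i=1$) and each $p_i-1$ must be a power of two, i.e.\ $p_i=2^{k_i}+1$. By Lemma~1 such a prime forces $k_i=2^{l_i}$, so $p_i=F_{l_i}=2^{2^{l_i}}+1$ is a Fermat prime. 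Hence $m=2^{e_0}\prod_{j\in S}F_j$ for some finite set $S$ of indices of pairwise distinct Fermat primes, and, putting $E:=\max(e_0-1,0)$, we obtain $\varphi(m)=2^{\,E+\sum_{j\in S}2^{j}}$.

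Next I would impose $\varphi(m)=2^{2^{n}+a}$, which is equivalent to the single equation
\[
E+\sum_{j\in S}2^{j}=2^{n}+a .
\]
Every $j\in S$ satisfies $2^{j}\le 2^{n}+a<2^{n+1}$, hence $S\subseteq\{0,1,\dots,n\}$; and here the hypothesis is used: since $F_n=2^{2^{n}}+1$ is not prime, the index $n$ is inadmissible, so $S$ is a subset of $\mathcal P_n:=\{\,j<n:\ F_j\ \text{prime}\,\}$, whose cardinality is exactly $t$. For any such $S$ we have $\sum_{j\in S}2^{j}\le 2^{0}+2^{1}+\dots+2^{n-1}=2^{n}-1<2^{n}+a$, so $E=2^{n}+a-\sum_{j\in S}2^{j}\ge a+1\ge 1$; consequently $e_0=E+1\ (\ge 2)$ is uniquely determined, and the coincidence $\varphi(2^{0})=\varphi(2^{1})$ produces no extra solution. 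Thus every $S\subseteq\mathcal P_n$ yields exactly one $m=2^{\,2^{n}+a-\sum_{j\in S}2^{j}+1}\prod_{j\in S}F_j$, conversely every solution has this form, and distinct $S$ give distinct $m$ by unique factorisation. Counting subsets of $\mathcal P_n$ gives $2^{|\mathcal P_n|}=2^{t}$ solutions.

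The genuine content, and the only place the non-primality assumption is essential, is pinning down which Fermat primes may appear. The size bound $2^{j}\le 2^{n}+a$ confines $j$ to $\{0,\dots,n\}$, but the borderline index $j=n$ is dangerous: were $F_n$ prime, a set $S\ni n$ could be infeasible (when $\sum_{j\in S}2^{j}>2^{n}+a$) or could force $E=0$ and thus two admissible exponents $e_0\in\{0,1\}$, breaking the clean power-of-two count. Excluding $n$ by non-primality is exactly what bounds $\sum_{j\in S}2^{j}$ by $2^{n}-1$, which is what forces $E\ge 1$ and turns $S\mapsto m$ into a bijection onto the preimage set. The remaining ingredients — the reduction through Lemma~1 and the bookkeeping — are routine.
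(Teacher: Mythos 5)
Your proof is correct and follows essentially the same route as the paper's: reduce via Lemma~1 and multiplicativity to $m=2^{e_0}\prod F_j$ with distinct Fermat primes, use non-primality of $2^{2^n}+1$ to confine the admissible indices to the $t$ Fermat primes below it, and use the bound $\sum_{j\in S}2^j\le 2^n-1<2^n+a$ to show each subset extends uniquely by a power of $2$ to a preimage, giving $2^t$ solutions. Your write-up is in fact slightly tighter than the paper's (you derive the structural decomposition explicitly from Lemma~1 and rule out the $\varphi(2^0)=\varphi(2^1)$ coincidence, which the paper leaves implicit), but the underlying argument is the same.
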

\begin{proof}
Consider a set $\left\{ {{p}_{1}},{{p}_{2}},...{{p}_{t}} \right\}$ of all prime Fermat numbers lesser than ${{2}^{{{2}^{n}}}}+1$. Let $\varphi \left( x \right)={{2}^{{{2}^{n}}+a}}$.
According to  Lemma 1, $x={{2}^{s}}{{q}_{1}}{{q}_{2}}...{{q}_{v}}$, where ${{q}_{i}}$ are different prime Fermat numbers.  Since $a<{{2}^{n}}$, then
${{2}^{{{2}^{n}}+a}}<{{2}^{{{2}^{n+1}}}}$. That means, that ${{q}_{i}}<{{2}^{{{2}^{n+1}}}}+1$, because  $\varphi \left( x \right)=\varphi \left( {{2}^{s}}{{q}_{1}}{{q}_{2}}...{{q}_{v}} \right)={{2}^{{{2}^{n}}+a}}<{{2}^{{{2}^{n+1}}}}$.

We also know that ${{q}_{i}}\ne {{2}^{{{2}^{n}}}}+1$, because ${{2}^{{{2}^{n}}}}+1$ is not prime. This yields ${{q}_{i}}<{{2}^{{{2}^{n}}}}+1$.
 Other words it can be written like this: $\left\{ {{q}_{1}},{{q}_{2}},...{{q}_{v}} \right\}\subseteq \left\{ {{p}_{1}},{{p}_{2}},...{{p}_{t}} \right\}$. For each $x$ we get, that
$\left\{ {{q}_{1}},{{q}_{2}},...{{q}_{v}} \right\}$ is a subset of  the set $\left\{ {{p}_{1}},{{p}_{2}},...{{p}_{t}} \right\}$. We shall prove, that each subset of the set ${{M}_{t}}=\left\{ {{p}_{1}},{{p}_{2}},...{{p}_{t}} \right\}$ determines such unique $x$ as a unique product of this subset of primes from ${{M}_{t}}$, that $x$ with a corresponding  multiplier ${{2}^{s}},\,\,s\in N\cup \{0\}$ gives us  $x={{2}^{s}}t$  such   that  $\varphi \left( x \right)={{2}^{{{2}^{n}}+a}}$.

 For this goal we need to show, that $\varphi \left( {{p}_{1}}\cdot {{p}_{2}}\cdot ...{{p}_{t}} \right)<{{2}^{{{2}^{n}}+a}}$.

 Since $\varphi \left( {{p}_{1}}\cdot {{p}_{2}}\cdot ...{{p}_{t}} \right)$ is Euler's function of a product of prime Fermat numbers, which lesser than ${{2}^{{{2}^{n}}}}+1$, it is not grater than value of Euler's function of a product  of all Fermat numbers, which lesser than ${{2}^{{{2}^{n}}}}+1$, which is equal to \[\varphi \left( \left( {{2}^{{{2}^{0}}}}+1 \right)...\left( {{2}^{{{2}^{n-1}}}}+1 \right) \right). \]

 That is true, as obvious inequality holds: $\varphi \left( d \right)\le \varphi \left( db \right)$. It is also known, that any two Fermat numbers are coprime \cite{Vinogradov}, so \[\varphi \left( \left( {{2}^{{{2}^{0}}}}+1 \right)...\left( {{2}^{{{2}^{n-1}}}}+1 \right) \right)=\varphi \left( {{2}^{{{2}^{0}}}}+1 \right)...\varphi \left( {{2}^{{{2}^{n-1}}}}+1 \right).\]

As known, $\varphi \left( y \right)\le y-1$, therefore
\[\varphi \left( {{2}^{{{2}^{0}}}}+1 \right)...\varphi \left( {{2}^{{{2}^{n-1}}}}+1 \right)\le \left( {{2}^{{{2}^{0}}}}+1-1 \right)\cdot ...\cdot \left( {{2}^{{{2}^{n-1}}}}+1-1 \right)={{2}^{0}}\cdot ...\cdot {{2}^{{{2}^{n-1}}}}={{2}^{{{2}^{n}}-1}}.\]
It was used the formula of the sum of geometric progression,  we have ${{2}^{0}}+{{2}^{1}}+...+{{2}^{n-1}}={{2}^{n}}-1$. Therefore
$\left( {{2}^{{{2}^{0}}}}+1-1 \right)\cdot ..\cdot \left( {{2}^{{{2}^{n-1}}}}+1-1 \right)={{2}^{{{2}^{0}}+{{2}^{1}}+..+{{2}^{n-1}}}}={{2}^{{{2}^{n}}-1}}$.

Finally, $$\varphi \left( {{p}_{1}}\cdot {{p}_{2}}\cdot ...{{p}_{t}} \right)\le {{2}^{{{2}^{n}}-1}}<{{2}^{{{2}^{n}}+a}},$$
what was needed. That means, that Euler's function of the product of the elements of any subset of the set $\left\{ {{p}_{1}},{{p}_{2}},...{{p}_{t}} \right\}$ is lesser than ${{2}^{{{2}^{n}}+a}}$.
Let us take an arbitrary subset of $\left\{ {{p}_{1}},{{p}_{2}},...{{p}_{t}} \right\}$. Let the elements of this set be $\left\{ {{q}_{1}},{{q}_{2}},...{{q}_{v}} \right\}$. Consider the expression $\varphi \left( {{q}_{1}}\cdot {{q}_{2}}\cdot ...\cdot {{q}_{v}} \right)={{2}^{w}}<{{2}^{{{2}^{n}}+a}}$.
 This inequality means, that we can choose such natural number $s$, so $\varphi \left( {{2}^{s}}\cdot {{q}_{1}}\cdot {{q}_{2}}\cdot ...\cdot {{q}_{v}} \right)={{2}^{s-1}}\cdot {{2}^{w}}={{2}^{{{2}^{n}}+a}}$. In other words, for given subset $\left\{ {{q}_{1}},{{q}_{2}},...{{q}_{v}} \right\}$, we found such number $x$, that $\varphi \left( x \right)={{2}^{{{2}^{n}}+a}}$. The last equality means, that each subset defines unique $x$.
 Therefore, each subset gives us the needed the number $x$ that is always determined by some subset. In other words, the amount of needed numbers is exactly the amount of different possible subsets. As well-known fact, this amount is equal ${{2}^{t}}$ for a set of $t$  elements.
\end{proof}

     \begin{example} For a non-prime Fermat number ${{2}^{32}}+1,$ number of preimages  for subsequent numbers  of the form ${{2}^{{{2}^{n}}+a}},\,\,\,\,a\le 32-1, \,\,\,  n \le 4$  is  equal  to  $2^{32}$.

For generalizing of Theorem 2 it is easy to prove the following statement:

\begin{theorem}
 Let $a\in Z,\,\,  0\le a\le {{2}^{n}}$, then  the number of solutions of  $\varphi (x)={{2}^{{{2}^{n}}+a}}$ is equal to the number of sets $\{{{2}^{{{i}_{1}}}},\,...,\,\,{{2}^{{{i}_{k}}}}\}$, such that:
\begin{align*}
  & {{i}_{1}}<{{i}_{2}}<\,\,...\,\,<{{i}_{k}} \\
 & {{2}^{{{i}_{1}}}}+{{2}^{{{i}_{2}}}}+...+{{2}^{{{i}_{k}}}}\le {{2}^{n}}+a \\
 & {{2}^{{{2}^{{{i}_{1}}}}}}+{{1,...,2}^{{{2}^{{{i}_{k}}}}}}+1\in {{\text{F}}_{pr}}, \\
\end{align*}
where ${{\text{F}}_{pr}}$ is a  set of  Ferma's prime numbers.

If  ${{2}^{{{2}^{n}}}}+1$ is not prime, then the number of specified sets (including  empty set) is equal to ${{2}^{t}}$,  where $t$ is a number of  Ferma's  prime numbers smaller then ${{2}^{{{2}^{n}}}}+1$.
\end{theorem}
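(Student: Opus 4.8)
The plan is to reduce Theorem~3 to the structural description of $\varphi^{-1}(m)$ already extracted in the proof of Theorem~2, and then recast the counting problem in terms of binary exponents. First I would invoke Lemma~1 to record that any solution $x$ of $\varphi(x)=2^{2^n+a}$ must be of the form $x = 2^s q_1 q_2 \cdots q_v$ with the $q_j$ distinct Fermat primes (a product of distinct primes each one more than a power of two, since $\varphi(x)$ is a power of $2$ forces every odd prime divisor $p\mid x$ to satisfy $p-1$ a power of $2$ and to occur to the first power). Writing $q_j = 2^{2^{i_j}}+1$ and ordering $i_1 < i_2 < \cdots < i_k$, the multiplicativity of $\varphi$ on coprime factors gives $\varphi(x) = 2^{s-1}\cdot 2^{2^{i_1}}\cdot 2^{2^{i_2}}\cdots 2^{2^{i_k}}$ when $s\ge 1$ (and $\varphi(x)=2^{2^{i_1}+\cdots+2^{i_k}}$ when $s=0$). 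Hence $\varphi(x)=2^{2^n+a}$ is equivalent to $2^{i_1}+\cdots+2^{i_k} + (s-1) = 2^n+a$ for some integer $s\ge 1$, i.e. to the single inequality $2^{i_1}+\cdots+2^{i_k}\le 2^n+a$, the leftover being absorbed uniquely into the power $2^{s}$.

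Next I would make the correspondence precise in both directions. Given a set $\{2^{i_1},\dots,2^{i_k}\}$ satisfying the three displayed conditions, the exponent $s$ is determined uniquely by $s = 2^n+a - (2^{i_1}+\cdots+2^{i_k}) + 1 \ge 1$, so the set yields exactly one $x$; conversely, every solution $x$ produces, via its odd prime factorization, exactly one such admissible set (the empty set corresponding to $x = 2^{2^n+a+1}$, which indeed has $\varphi(x) = 2^{2^n+a}$). This bijection establishes the first assertion of the theorem: the number of solutions equals the number of admissible exponent-sets. The only subtlety to handle carefully is the $s=0$ versus $s\ge 1$ bookkeeping at the boundary — I would note that taking $s\ge 1$ loses nothing, since any purely odd solution $q_1\cdots q_v$ can be replaced by $2\cdot q_1\cdots q_v$ without changing $\varphi$, so restricting to $s\ge1$ gives the clean condition $2^{i_1}+\cdots+2^{i_k}\le 2^n+a$ with no double counting.

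For the final count under the hypothesis that $2^{2^n}+1$ is not prime, I would argue exactly as in Theorem~2 that the constraint $2^{i_1}+\cdots+2^{i_k}\le 2^n+a$ is automatically satisfied by \emph{every} subset of the set of available Fermat-prime indices. Indeed, the Fermat primes below $2^{2^n}+1$ are among $F_0,\dots,F_{n-1}$, so the indices $i_j$ available range over a subset of $\{0,1,\dots,n-1\}$; the largest possible value of $2^{i_1}+\cdots+2^{i_k}$ is therefore $2^0+2^1+\cdots+2^{n-1} = 2^n-1$, which is $\le 2^n \le 2^n+a$. Hence when $2^{2^n}+1$ is composite there is no binding constraint, and the admissible sets are precisely \emph{all} subsets of the $t$-element set of Fermat-prime indices, giving $2^t$ of them (the empty set included). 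This matches Theorem~2 and also explains why the ``not prime'' hypothesis is exactly what kills the inequality constraint.

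The main obstacle I expect is purely expository rather than mathematical: pinning down the $s=0$ edge case and making sure the map from solutions to exponent-sets is genuinely a bijection (no solution is missed, none is counted twice) when one allows or forbids the factor $2^0 = 1$ inside $x$. The arithmetic inequality $2^0+\cdots+2^{n-1}=2^n-1<2^n+a$ is immediate from the geometric series formula already used in Theorem~2, so the composite-Fermat-number case needs no new estimate; the content is entirely in the clean reformulation of $\varphi(x)=2^{2^n+a}$ as a subset-sum condition on binary exponents.
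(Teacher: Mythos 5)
Your outline — invoke the structural fact that every solution has the form $2^{s}q_{1}\cdots q_{v}$ with distinct Fermat primes $q_{j}$, translate $\varphi(x)=2^{2^{n}+a}$ into the subset--sum condition $2^{i_{1}}+\cdots+2^{i_{k}}\le 2^{n}+a$, and then count subsets — is sound in outline and is considerably more complete than the paper's own proof, which only performs the forward construction (choose an admissible set, pad the product of its Fermat primes by a power of two) and never addresses the converse direction, uniqueness, or the final $2^{t}$ count; the paper even pads by $2^{2^{n}+a-s}$, which yields $\varphi(x)=2^{2^{n}+a-1}$ unless the sum is exactly $2^{n}+a$, an off-by-one that your formula $s=2^{n}+a-(2^{i_{1}}+\cdots+2^{i_{k}})+1$ corrects.

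However, the step you dismiss as ``the only subtlety'' is a genuine gap, and it is exactly where the first assertion of the theorem breaks. If an admissible set has sum \emph{exactly} $2^{n}+a$, then the odd number $x=q_{1}\cdots q_{k}$ and $2x$ are two \emph{distinct} solutions which your map sends to the same set; replacing the odd solution by its double does not delete it from the solution count, so the correspondence is two-to-one on such sets and ``no double counting'' is false. Concretely, take $n=1$, $a=0$, so $m=4$: the admissible sets are $\emptyset$, $\{2^{0}\}$, $\{2^{1}\}$ (three of them), while $\varphi^{-1}(4)=\{5,8,10,12\}$ has four elements, with $5$ and $10=2\cdot 5$ both arising from $\{2^{1}\}$, whose sum equals $2^{1}+0$ exactly. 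Hence the unconditional claim ``number of solutions $=$ number of sets'' cannot be established by your bijection (nor at all, as stated); a correct treatment must count the equality case separately. Your argument for the second assertion does survive: when $2^{2^{n}}+1$ is composite the admissible indices lie in $\{0,\dots,n-1\}$ (apart from the corner case $a=2^{n}$ with $2^{2^{n+1}}+1$ prime, which you should exclude or discuss, since this theorem allows $a\le 2^{n}$), so the largest possible sum is $2^{n}-1<2^{n}+a$, equality never occurs, the map is a genuine bijection, the inequality constraint is vacuous, and the count $2^{t}$ follows as in Theorem 2.
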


\begin{proof}
To construct the necessary preimage $x$ over the set of  Ferma's  primes with the properties of this Theorem $\varphi (x)={{2}^{{{2}^{n}}+a}}$  we proceed as follows:

1) We choose a combination of this numbers. Let us call it \[{{\left( {{2}^{{{2}^{{{i}_{0}}}}}}+1 \right)}}...{{\left( {{2}^{{{2}^{{{i}_{k-1}}}}}}+1 \right)}}.\]

2) Then we should find its total power of 2 that is ${{2}^{{{i}_{1}}}}+{{2}^{{{i}_{2}}}}+...+{{2}^{{{i}_{k}}}}=s$, this power be obtained after calculating the Euler's function from the product $\varphi \left( {{\left( {{2}^{{{2}^{{{i}_{0}}}}}}+1 \right)}}...{{\left( {{2}^{{{2}^{{{i}_{k}}}}}}+1 \right)}} \right)$  and also satisfies the inequality

$$s={{2}^{{{i}_{1}}}}+{{2}^{{{i}_{2}}}}+...+{{2}^{{{i}_{k}}}}\le {{2}^{n}}+a.$$

We supplement the received power exponent $s$ to the necessary ${{2}^{n}}+a$ by multiplying the product of
\[{{\left( {{2}^{{{2}^{{{i}_{0}}}}}}+1 \right)}}...{{\left( {{2}^{{{2}^{{{i}_{k}}}}}}+1 \right)}}\]

    on $2^{{{2}^{n}}+a-s}$. Thus, the necessary preimage $x$ is constructed.
\end{proof}

\begin{proper}
For any number $S$ of the form $p_{1}^{{{\alpha }_{1}}}p_{2}^{{{\alpha }_{2}}}...\,p_{k}^{{{\alpha }_{k}}},\,\,{{p}_{1}}>2$,  where ${{p}_{1}},\,{{p}_{2}},\,...\,,{{p}_{k}}$ are odd prime numbers,  the following equality  holds: $\varphi (S)=\varphi (2S).$
\end{proper}

\begin{proof}
Since  2 and $p_{1}^{{{\alpha }_{1}}}p_{2}^{{{\alpha }_{2}}}...\,p_{k}^{{{\alpha }_{k}}},\,\,{{p}_{1}}>2$ are coprime,  then \[\varphi \left( 2p_{1}^{{{\alpha }_{1}}}p_{2}^{{{\alpha }_{2}}}...\,p_{k}^{{{\alpha }_{k}}} \right)=\varphi \left( 2 \right)\varphi \left( p_{1}^{{{\alpha }_{1}}}p_{2}^{{{\alpha }_{2}}}...\,p_{k}^{{{\alpha }_{k}}} \right)=\varphi \left( p_{1}^{{{\alpha }_{1}}}p_{2}^{{{\alpha }_{2}}}...\,p_{k}^{{{\alpha }_{k}}} \right).\]
 Therefore these numbers has the same of Euler's function.
\end{proof}
\end{example}

 \begin{theorem}   If $\varphi \left( m \right)={{2}^{n}}$, then $m={{2}^{s}}{{p}_{1}}{{p}_{2}}...{{p}_{x}}$, where ${{p}_{i}}$ are different Fermat numbers, $s\in N$.
\begin{proof}
Firstly, we will prove that $m$ can be divisible by odd prime number $p$, only if $ p$ is prime Fermat number.
Let's consider an arbitrary prime number p such that $m\vdots p$. Then $m={{p}_{1}}^{{{\text{ }\!\!\alpha\!\!\text{ }}_{1}}}{{p}_{2}}^{{{\text{ }\!\!\alpha\!\!\text{ }}_{2}}}...{{p}_{x}}^{{{\text{ }\!\!\alpha\!\!\text{ }}_{x}}}\cdot {{p}^{\text{ }\!\!\alpha\!\!\text{ }}}$, where $\text{ }\!\!\alpha\!\!\text{ }\ge 1$.
As Euler's function is multiplicative and  $\varphi \left( {{p}^{\text{x}}} \right)={{p}^{\text{x-1}}}(p-1)$, as well-known fact, we can write:

$\varphi \left( m \right)=\varphi \left( {{p}_{1}}^{{{\text{ }\!\!\alpha\!\!\text{ }}_{1}}}{{p}_{2}}^{{{\text{ }\!\!\alpha\!\!\text{ }}_{2}}}...{{p}_{x}}^{{{\text{ }\!\!\alpha\!\!\text{ }}_{x}}} \right)\cdot \varphi \left( {{p}^{\text{ }\!\!\alpha\!\!\text{ }}} \right)=\varphi \left( {{p}_{1}}^{{{\text{ }\!\!\alpha\!\!\text{ }}_{1}}}{{p}_{2}}^{{{\text{ }\!\!\alpha\!\!\text{ }}_{2}}}...{{p}_{x}}^{{{\text{ }\!\!\alpha\!\!\text{ }}_{x}}} \right)\cdot {{p}^{\text{ }\!\!\alpha\!\!\text{ -1}}}(p-1)$.

Therefore, $\varphi \left( m \right)\vdots \left( p-1 \right)$. If  $(p-1)$ is divisible by odd prime number $q$, then $\varphi \left( m \right)$ is also divisible by $q$, which can't be, because $\varphi \left( m \right)={{2}^{n}}$. That is why $ p-1$ can be divisible only by 2, which yields $(p-1)={{2}^{k}}$, or
$p={{2}^{k}}+1$. According to lemma, $k={{2}^{n}}$, so we can rewrite $p={{2}^{{{2}^{n}}}}+1$. In other words, $p$ is Fermat prime number, what was needed.

Secondly, we will prove that $m$  can't be divisible by ${{p}^{2}}$, where $p$ is prime, $p\ne 2$. We will prove by contradiction.
 If $m$ is divisible by ${{p}^{2}}$, then $m={{p}_{1}}^{{{\text{ }\!\!\alpha\!\!\text{ }}_{1}}}{{p}_{2}}^{{{\text{ }\!\!\alpha\!\!\text{ }}_{2}}}...{{p}_{x}}^{{{\text{ }\!\!\alpha\!\!\text{ }}_{x}}}\cdot {{p}^{\text{ }\!\!\alpha\!\!\text{ }}}$, where  $\text{ }\!\!\alpha\!\!\text{ }\ge 2$. As we already know, \[\varphi \left( m \right)=\varphi \left( {{p}_{1}}^{{{\text{ }\!\!\alpha\!\!\text{ }}_{1}}}{{p}_{2}}^{{{\text{ }\!\!\alpha\!\!\text{ }}_{2}}}...{{p}_{x}}^{{{\text{ }\!\!\alpha\!\!\text{ }}_{x}}} \right)\cdot \varphi \left( {{p}^{\text{ }\!\!\alpha\!\!\text{ }}} \right)=\varphi \left( {{p}_{1}}^{{{\text{ }\!\!\alpha\!\!\text{ }}_{1}}}{{p}_{2}}^{{{\text{ }\!\!\alpha\!\!\text{ }}_{2}}}...{{p}_{x}}^{{{\text{ }\!\!\alpha\!\!\text{ }}_{x}}} \right)\cdot {{p}^{\text{ }\!\!\alpha\!\!\text{ -1}}}(p-1) ,\]  so $\varphi \left( m \right)\vdots p$. But $p\ne 2$, while $\varphi \left( m \right)={{2}^{n}}$. Contradiction.
\end{proof}
\end{theorem}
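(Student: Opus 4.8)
The plan is to start from the prime factorisation of $m$ and exploit the multiplicativity of $\varphi$. Write $m = 2^{s}p_{1}^{\alpha_{1}}p_{2}^{\alpha_{2}}\cdots p_{x}^{\alpha_{x}}$, where $p_{1},\dots,p_{x}$ are distinct odd primes, each $\alpha_{i}\ge 1$, and $s\ge 0$. Since $2,p_{1},\dots,p_{x}$ are pairwise coprime, $\varphi(m)=\varphi(2^{s})\prod_{i=1}^{x}\varphi(p_{i}^{\alpha_{i}})=2^{n}$, and hence every factor $\varphi(p_{i}^{\alpha_{i}})$ on the left-hand side divides $2^{n}$, i.e.\ is itself a power of $2$. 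This is the only structural fact about $\varphi$ the argument needs.

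Next I would analyse $\varphi(p_{i}^{\alpha_{i}})=p_{i}^{\alpha_{i}-1}(p_{i}-1)$. If $\alpha_{i}\ge 2$, this number is divisible by the odd prime $p_{i}$, contradicting that it is a power of $2$; therefore $\alpha_{i}=1$ for all $i$, so $m$ is squarefree in its odd part. With $\alpha_{i}=1$ the condition becomes $p_{i}-1=2^{k_{i}}$ for some $k_{i}\ge 1$, that is $p_{i}=2^{k_{i}}+1$. Applying Lemma~1 forces $k_{i}=2^{r_{i}}$ for a non-negative integer $r_{i}$, so $p_{i}=2^{2^{r_{i}}}+1$ is a prime Fermat number; distinctness of the $p_{i}$ gives distinct Fermat primes. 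The $2$-part contributes $\varphi(2^{s})=2^{\max(s-1,0)}$, which is automatically a power of $2$ and so imposes no further restriction, yielding $m=2^{s}p_{1}p_{2}\cdots p_{x}$ as claimed.

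The whole proof is bookkeeping once multiplicativity of $\varphi$ and Lemma~1 are available: the one genuine input is Lemma~1, which upgrades ``$p-1$ is a power of $2$'' to ``$p$ is a Fermat number.'' The only point worth stating carefully is the separation into the two reductions — first $\alpha_{i}=1$, then the Fermat-prime shape — together with the handling of the factor $2^{s}$; I would organise the write-up exactly along these two steps, which is the structure the excerpt already indicates (``Firstly \dots Secondly \dots'').
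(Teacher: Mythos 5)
Your proposal is correct and follows essentially the same route as the paper: it uses multiplicativity of $\varphi$ to force each $\varphi(p_i^{\alpha_i})$ to be a power of $2$, deduces $\alpha_i=1$ (since $p_i^{\alpha_i-1}$ would otherwise contribute an odd prime factor) and $p_i-1=2^{k_i}$, and then invokes Lemma~1 to conclude each $p_i$ is a Fermat prime. The only difference is the order of the two reductions (you do squarefreeness first, the paper does the Fermat shape first), which is immaterial.
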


\end{section}

\begin{section}{Low bound of $\varphi^{-1}(m)$ values for number of general form}

We propose a lower bound for computing quantity of the inverses of Euler's function.
Our approach can be further adapted for computing certain functions of the inverses, such as their quantity, the larger.

Let ${{M}_{k}}$ be a set of first $k$ consecutive primes.
We will say, that the number is \emph{decomposed over a set} ${{M}_{k}}$, if in its canonical decomposition there are only numbers from ${{M}_{k}}$.
Let ${{x}_{1}},\,...\,,\,{{x}_{n+2}}$  be such numbers, that $\varphi \left( {{x}_{1}} \right)=\,\varphi \left( {{x}_{2}} \right)=...\,=\,\varphi \left( {{x}_{n+2}} \right)$, and at the same time all prime factors of the canonical decomposition   belong to the set ${{M}_{n}}=\{{{p}_{0}},....,{{p}_{n}}\}$, where ${{p}_{0}}=2$ and ${{p}_{i}}$ are all consecutive prime numbers.  Let for any natural number $n$, we have ${{Q}_{n}}=\left( {{p}_{0}}-1 \right)\left( {{p}_{1}}-1 \right)...\left( {{p}_{n-1}}-1 \right)\left( {{p}_{n}}-1 \right)$,
where ${{p}_{i}}$ is $i $-th odd prime number, where $i\in N$ and ${{p}_{0}}=2$.
Example: ${{p}_{1}}=3,\text{ }{{p}_{2}}=5,\text{ }{{p}_{3}}=7$,  then  ${{Q}_{3}}=\left( {{p}_{0}}-1 \right)\left( {{p}_{1}}-1 \right)\left( {{p}_{2}}-1 \right)\left( {{p}_{3}}-1 \right)  = \left( 3-1 \right)\left( 5-1 \right)\left( 7-1 \right)=48$.

Let $M_k$ be a set of $k$ consequent first prime numbers.
The following statement about estimation of preimages number is true.
\begin{theorem}

 For any natural number $n$  there are such various natural numbers \\ ${{x}_{1}}\text{, }{{x}_{2}},\text{ }...\text{ }\text{,}\,{{x}_{n}}\text{, }{{x}_{n+1}}\text{, }{{x}_{n+2}}$, that
$$\varphi \left( {{x}_{1}} \right)=\varphi \left( {{x}_{2}} \right)=...=\varphi \left( {{x}_{n+2}} \right)={{Q}_{n}},$$
where every number ${{x}_{i}}$ contains in its canonical decomposition \cite{Vinogradov} only ${{p}_{i}}$ from ${{M}_{n}}$, and \[{{x}_{n+2}}={{p}_{0}}{{p}_{1}}\,...\,{{p}_{n-1}}{{p}_{n}}\] holds.

\end{theorem}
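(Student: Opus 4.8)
The plan is to exhibit the $n+2$ numbers explicitly and then verify three things about them: that $\varphi$ takes the value $Q_n$, that each is decomposed over $M_n$, and that they are pairwise distinct. For $j\in\{0,1,\dots,n\}$ define
\[
y_j := (p_j-1)\prod_{0\le i\le n,\ i\ne j} p_i ;
\]
then take $x_{n+2}:=p_0p_1\cdots p_n$ and let $x_1,\dots,x_{n+1}$ be $y_1,\dots,y_n,y_0$ (in particular $x_{n+1}=y_0=p_1\cdots p_n$). The idea is that $y_j$ is the preimage $p_0p_1\cdots p_n$ with the prime $p_j$ removed and $\varphi(p_j)=p_j-1$ put back in its place; since $p_j-1$ is composed of primes already present, this only raises some exponents and, as the calculation shows, leaves $\varphi$ unchanged.

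First I would handle the routine points. Every prime factor of $p_j-1$ is less than $p_j$, hence is one of $p_0,\dots,p_{j-1}$, and $\{p_0,\dots,p_{j-1}\}\subseteq M_n$; so all prime factors of $y_j$, and trivially of $x_{n+2}$, lie in $M_n$. Also, by multiplicativity of $\varphi$ on the squarefree number $x_{n+2}$, one has $\varphi(p_0p_1\cdots p_n)=\prod_{i=0}^{n}(p_i-1)=Q_n$.

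Next comes the heart of the argument: computing $\varphi(y_j)$. Write the canonical factorization $p_j-1=\prod_{i=0}^{j-1}p_i^{e_i}$ (with $e_i\ge0$; for $j=0$ the product is empty and $p_0-1=1$). Because the primes of $p_j-1$ all have index $<j$ while $p_i>p_j-1$ for $j+1\le i\le n$, the canonical factorization of $y_j$ is $\bigl(\prod_{i=0}^{j-1}p_i^{1+e_i}\bigr)\bigl(\prod_{i=j+1}^{n}p_i\bigr)$. Hence, by multiplicativity and $\varphi(p^{a})=p^{a-1}(p-1)$,
\[
\varphi(y_j)=\left(\prod_{i=0}^{j-1}p_i^{e_i}(p_i-1)\right)\left(\prod_{i=j+1}^{n}(p_i-1)\right)=(p_j-1)\prod_{0\le i\le n,\ i\ne j}(p_i-1)=Q_n ,
\]
where the last equality uses $\prod_{i=0}^{j-1}p_i^{e_i}=p_j-1$. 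This "collecting the exponents" step is the one place that is not mere bookkeeping, and it is what I would write out carefully; I expect it to be the main obstacle.

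It remains to see the $n+2$ numbers are distinct. The number $x_{n+2}$ is divisible by each of $p_0,\dots,p_n$, whereas $y_j$ is not divisible by $p_j$ (since $p_j\nmid p_j-1$ and $p_j$ is omitted from the product), so $x_{n+2}$ differs from every $y_j$. For $0\le j<k\le n$ the prime $p_k$ divides $y_j$ but not $y_k$, so $y_j\ne y_k$. Therefore $x_1,\dots,x_{n+2}$ are $n+2$ distinct natural numbers with $\varphi(x_i)=Q_n$, each decomposed over $M_n$, and $x_{n+2}=p_0p_1\cdots p_n$, which is what the theorem asserts.
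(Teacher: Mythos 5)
Your argument is correct, but it follows a genuinely different route from the paper's own proof of this theorem. The paper proceeds by induction on $n$: starting from $\varphi(3)=\varphi(4)=\varphi(6)=2$, it multiplies the $k+2$ preimages of $Q_k$ by the next prime to obtain preimages of $Q_{k+1}$, and then argues that one additional preimage exists which decomposes over the smaller prime set only; that last existence step is asserted rather than exhibited, and the inductive bookkeeping (which prime is adjoined, why the new value still decomposes over the old set) is the delicate part of that approach. You instead give a fully explicit, non-inductive construction: take $x_{n+2}=p_0p_1\cdots p_n$ and, for each $j$, the number $y_j=(p_j-1)\prod_{i\ne j}p_i$, then verify by collecting exponents that $\varphi(y_j)=Q_n$ (using that every prime factor of $p_j-1$ is some $p_i$ with $i<j$, and $\varphi\bigl(p_i^{1+e_i}\bigr)=p_i^{e_i}(p_i-1)$ even when $e_i=0$), and check distinctness via divisibility by the omitted prime $p_j$. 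This is essentially the ``method of constructing'' the preimage set which the paper sketches only informally after its proof, promoted by you to the proof itself. What your route buys is self-containedness and the avoidance of the unconstructed extra preimage in the induction step; what the paper's induction aims at (beyond the statement) is the suggestion that the count can keep growing as new primes are adjoined. In short: correct, complete for the statement as given, and cleaner than the paper's inductive argument, though different from it.
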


\begin{proof}
We prove it by mathematical induction.

Base case: given $n = 1$, then ${{P}_{1}}=\left( {{p}_{1}}-1 \right)=2$  has at least three prototypes. This statement is true, because $\varphi \left( 3 \right)=\varphi \left( 4 \right)=\varphi \left( 6 \right)=2$. Base case is proved.

Step case: if for $n=k$ it holds, we will prove, that for $n=k+1$ it holds too. By the assumption we have, that for natural number $n$ were found such various natural ${{x}_{1}}\text{, }{{x}_{2}},\text{ }...\text{ }\text{, }{{x}_{k+1}}\text{, }{{x}_{k+2}}$,  that

\begin{equation} \label{Equl}
\varphi \left( {{x}_{1}} \right)=\varphi \left( {{x}_{2}} \right)=...=\varphi \left( {{x}_{k+1}} \right)=\varphi \left( {{x}_{k+2}} \right)={{Q}_{k}}=Q,
\end{equation}

where
${{Q}_{k+1}}=p_{0}^{{{\beta }_{0}}}p_{1}^{{{\beta }_{1}}}\,...\,p_{k}^{{{\beta }_{k}}},$

\[{{x}_{k+1}}={{p}_{1}}{{p}_{2}}...{{p}_{k-1}}{{p}_{k}}, \, {{x}_{k+2}}={{p}_{0}}{{p}_{1}}{{p}_{2}}...{{p}_{k-1}}{{p}_{k}}.\]
Let us make induction transition.
   Prove, that for  $n=k+1$ exist such various natural ${{y}_{1}}\text{, }{{y}_{2}}\text{, }...\text{ }\text{, }{{y}_{k+2}},\text{ }{{y}_{k+3}}$, for which holds:

\begin{equation} \label{Equl}
 \varphi \left( {{y}_{1}} \right)=\varphi \left( {{y}_{2}} \right)=\,\,...\,\,=\varphi \left( {{y}_{k+2}} \right)=\varphi \left( {{y}_{k+3}} \right)={Q}_{k+1},
\end{equation}
each of which has a canonical decomposition over ${{M}_{k}}\cup p_{k+3}$.
Clear, that  $\varphi \left( {{p}_{k+3}} \right)$ has a canonical decomposition into elements of ${{M}_{k}}$.

Therefore, it can be presented as $$\varphi \left( {{p}_{k+3}} \right)=p_{0}^{{{\beta }_{0}}}p_{1}^{{{\beta }_{1}}}\, ... \,p_{k}^{{{\beta }_{k}}}.$$

Let's construct new numbers  ${{y}_{1}}\text{, }{{y}_{2}}\text{, }\,...\text{ }\,\text{,}{{y}_{k+1}}\text{, }{{y}_{k+2}}\text{, }{{y}_{k+3}}$  such that:

\[{{y}_{1}}\text{=}\,{{x}_{1}}{{p}_{k+3}}\text{, }\,\,{{y}_{2}}\text{=}\,{{x}_{2}}{{p}_{k+3}}\text{, }\,...\text{ }\,\text{,}{{y}_{k+1}}\text{=}\,{{x}_{k+1}}{{p}_{k+3}}\text{,}\,\text{ }{{y}_{k+2}}\text{=}\,{{x}_{k+2}}{{p}_{k+3}}\]

 In this case, the value of the Euler function is ${{Q}_{k+1}}=p_{0}^{{{\beta }_{0}}}p_{1}^{{{\beta }_{1}}}\,...\,p_{k}^{{{\beta }_{k}}}.$
  Let us show, that all  ${{y}_{1}}\text{, }\,...\text{ }\,\text{,}{{y}_{k+2}},{{y}_{k+3}}$ are different.

 Since numbers ${{x}_{1}}\text{, }{{x}_{2}},\text{ }...\text{ }\text{, }{{x}_{k+1}}\text{, }{{x}_{k+2}}$  from (1) have different canonical decompositions, so the decompositions of numbers ${{y}_{1}}\text{, }{{y}_{2}}\text{, }\,...\text{ }\,\text{,}{{y}_{k+1}}\text{, }{{y}_{k+2}}$  over $M_k$  are different too, but they all have a new factor  ${{p}_{k+1}}$, but do not decompose over ${{M}_{k}}$. A last one ${{y}_{k+3}}$ also decomposes over ${{M}_{k}}$  and does not contain a factor  ${{p}_{k+1}}$.
But value   ${{Q}_{k+1}}=p_{0}^{{{\beta }_{0}}}p_{1}^{{{\beta }_{1}}}\,...\,p_{n}^{{{\beta }_{n}}}$  does not contain ${{p}_{k+1}}$ in the decomposition, so there is at least one number ${{y}_{k+3}}$ with decomposition over ${{M}_{k}}$, such, that $\varphi \left( {{y}_{k+3}} \right)={{Q}_{k+1}}$ holds.

Since ${{Q}_{k+1}}>{{Q}_{k}}$, then a new preimage ${{y}_{k+3}}$ does not coincide with any of the numbers ${{y}_{1}}\text{, }{{y}_{2}},\text{ }...\text{ }\text{, }{{y}_{k+1}}\text{, }{{y}_{k+2}}$ which give the value of Euler's function equal ${{Q}_{k}}$.

Moreover such ${{y}_{k+3}}$ can be not unique number that can be constructed over ${{M}_{k}}$ such, that $\varphi \left( {{y}_{k+3}} \right)={{Q}_{k+1}}$. Consequently beyond  ${{y}_{1}}\text{, }{{y}_{2}}\text{, }\,...\text{ }\,\text{,}{{y}_{k+1}}\text{, }{{y}_{k+2}}$, which decomposed over ${{M}_{k+1}}$, we  have at least   one new ${{y}_{k+3}}$,  which can be decomposed over ${{M}_{k}}$ in product of primes. Thus ${{Q}_{k+1}}$ has at least $k+3$ different preimages.
\end{proof}

We propose \textbf{method of constructing} of such preimages set.

Let ${{p}_{0}}=2,{{p}_{1}}=3,{{p}_{2}}=5,\ldots ,{{p}_{n}}$ be consecutive prime numbers.
Note, that \\
 $\varphi ({{p}_{0}}{{p}_{1}},\ldots ,{{p}_{n}})=({{p}_{0}}-1)({{p}_{1}}-1)...({{p}_{n}}-1).$
Let's construct some other numbers ${{x}_{0}},\ldots ,{{x}_{n}}$, for which
\[\varphi ({{x}_{0}})=\varphi ({{x}_{1}})=...=\varphi ({{x}_{n}})=\varphi ({{p}_{0}},{{p}_{1}},\ldots ,{{p}_{n}})=({{p}_{0}}-1)({{p}_{1}}-1)...({{p}_{n}}-1).\]
Namely, let
\begin{align*}
  & {{x}_{0}}=({{p}_{0}}-1){{p}_{0}},\ldots ,{{p}_{n}} \\
 & {{x}_{1}}={{p}_{0}}({{p}_{1}}-1){{p}_{2}},\ldots ,{{p}_{n}} \\
 & \cdots    \\
  & {{x}_{n}}={{p}_{0}}{{p}_{1}},\ldots ,{{p}_{n-1}}({{p}_{n-1}}-1) \\
\end{align*}

Now let's prove, that $\varphi ({{p}_{0}}{{p}_{1}}...{{p}_{k-1}}({{p}_{k}}-1){{p}_{k+1}}...{{p}_{n}})=({{p}_{0}}-1)({{p}_{1}}-1)...({{p}_{n}}-1)$
for every $k=0,1,...,n$.
Obviously,  ${{p}_{0}}...{{p}_{k-1}}({{p}_{k}}-1)$  and  ${{p}_{k+1}}...{{p}_{n}}$ are  coprime, so  $\varphi ({{x}_{k}})=\varphi ({{p}_{0}}{{p}_{1}}...{{p}_{k-1}}({{p}_{k}}-1))\times \varphi ({{p}_{k+1}}  \ldots {{p}_{n}})=\varphi ({{p}_{0}}{{p}_{1}}...{{p}_{k-1}}({{p}_{k}}-1))\times ({{p}_{k+1}}-1)...({{p}_{k}}-1)$
That is, we have to prove the equality $\varphi ({{p}_{0}}{{p}_{1}}...{{p}_{k-1}}({{p}_{k}}-1))=({{p}_{0}}-1)({{p}_{1}}-1)...({{p}_{k}}-1)$.

Since only ${{p}_{0}}{{p}_{1}},\ldots ,{{p}_{k-1}}$  are the prime numbers, which are not more than  $({{p}_{k}}-1)$,  we have ${{p}_{k}}-1={{\alpha }_{0}}{{\alpha }_{1}},\ldots ,{{\alpha }_{k-1}}$  for some non-negative integer ${{\alpha }_{0}}{{\alpha }_{1}},\ldots ,{{\alpha }_{k-1}}$.

By direct calculation we obtain \begin{align*}
  & \varphi ({{p}_{0}}{{p}_{1}}...{{p}_{k-1}}({{p}_{k}}-1))=\varphi (p_{0}^{{{\alpha }_{0}}+1}p_{1}^{{{\alpha }_{1}}+1}...p_{k-1}^{{{\alpha }_{k-1}}+1})=\\
  &({{p}_{0}}-1)...({{p}_{k-1}}-1)p_{0}^{({{\alpha }_{0}}+1)-1}...p_{k-1}^{({{\alpha }_{k-1}}+1)-1}= \\
 & =({{p}_{0}}-1)({{p}_{1}}-1)...({{p}_{k-1}}-1)p_{0}^{{{\alpha }_{0}}}...p_{k-1}^{{{\alpha }_{k-1}}}=({{p}_{0}}-1)({{p}_{1}}-1)...({{p}_{k-1}}-1)({{p}_{k}}-1) \\
\end{align*}

Also we may subtract 1 from more than one ${{p}_{k}}$,  if $({{p}_{k}}-1)$ has the decomposition into prime factors, which does not contain some  ${{p}_{j}}(j<k)$.
For example, $\varphi ({{p}_{0}}{{p}_{1}}{{p}_{2}}{{p}_{3}})=\varphi (2\times 3\times 5\times 7)=48$. Except  $(2-1)\times 3\times 5\times 7$,  $2\times (3-1)\times 5\times 7$,   $2\times 3\times (5-1)\times 7$  and   $2\times 3\times 5\times (7-1)$, we may take, for example, $2\times (3-1)(5-1)\times 7$, because  $(3-1)=2$  and   $(5-1)={{2}^{2}}$.
Hence  $\varphi (2\times (3-1)(5-1))=(2-1)(3-1)(5-1)$  by the same arguments, as for ${{p}_{0}},\ldots ,{{p}_{k-1}}({{p}_{k}}-1){{p}_{k+1}},\ldots ,{{p}_{n}}$. So, we may construct at most  ${{2}^{n}}$  products of the form  ${{p}_{0}}{{q}_{1}},\ldots ,{{q}_{n}}$, where ${{q}_{k}}={{p}_{k}}$. Also $({{p}_{0}}-1){{p}_{1}},\ldots ,{{p}_{n}}$ fits for the requirement $\varphi ({{p}_{0}}-1){{p}_{1}},\ldots ,{{p}_{n}}=({{p}_{0}}-1)...({{p}_{n}}-1)$, so we have at most ${{2}^{n}}+1$ numbers, which give us the
same meaning of  $\varphi $, as  ${{p}_{0}},\ldots ,{{p}_{n}}$. Note, that it is not necessarily the complete set of such numbers  $x$, for which  $\varphi (x)={{p}_{0}}{{p}_{1}},\ldots ,{{p}_{n}}$, but it is the set, which may be obtained by the given by us scheme.

An old Sierpinski conjecture asserts that for every integer $k > 2$, there is a number m for which the equation  $\varphi (t)=m$  has exactly $k$  solutions the number of solutions $t$  of $\varphi (t)=m$, also called the multiplicity of $m$. In this section we We find multiplicity for numbers of form $m={{2}^{2}}{{3}^{x}}$,  $m={{2}^{k}}{{3}^{n}}$.
denote by ${{\eta }_{p}}(x)$  the $p$-adic power exponent of number $m$ viz such minimal number of $n$ that $m=t^na$, where $a$ is divisor of unit in the ring $Z_p$. 
\begin{theorem}
  There exists 3 classes of possible preimages of $m$  which satisfy $\varphi \left( t \right)={{2}^{k}}{{3}^{n}}$:
\end{theorem}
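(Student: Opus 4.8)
The plan is to follow the same structure as the proof of Theorem~5 (the case $\varphi(m)=2^n$), using the classification lemma as the backbone: if $\varphi(t)=2^k3^n$, then $t=2^a3^b p_1^{\alpha_1}\cdots p_r^{\alpha_r}$ where each prime $p_i>3$ must satisfy that $p_i-1$ divides $2^k3^n$, hence $p_i-1=2^c3^d$. First I would run the divisibility argument of Theorem~5: since $\varphi$ is multiplicative and $\varphi(p^\alpha)=p^{\alpha-1}(p-1)$, any odd prime factor $p$ of $t$ contributes $p-1$ to $\varphi(t)$, so $p-1\mid 2^k3^n$ and thus $p$ is a prime of the form $2^c3^d+1$; moreover the exponent $\alpha$ of such a prime must be $1$ unless $p=3$ (for $p=3$ a higher power is allowed since $\varphi(3^\beta)=2\cdot 3^{\beta-1}$ only introduces the allowed primes $2$ and $3$), and the exponent of $2$ and of $3$ dividing $t$ are likewise constrained. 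This is the routine part and mirrors the earlier theorem almost verbatim.

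The substantive content is the separation into exactly three classes. I would organize $t$ according to how the factor $3^n$ in $\varphi(t)$ is produced: either (i) $3^{n}$ or a $3$-power comes from a high power of $3$ dividing $t$ itself, i.e.\ $t=2^a3^b$ (a ``pure'' preimage with no larger primes, using Property~1 to pair $a$ and $a-1$); or (ii) part of the $3$-power in $\varphi(t)$ comes from primes $p$ with $p-1$ divisible by $3$, i.e.\ Pierpont-type primes $p=2^c3^d+1$ with $d\ge1$ (such as $7,13,19,37,\dots$) dividing $t$; or (iii) a mixed class where $t$ contains both a genuine power of $3$ and one or more such Pierpont primes. I would write each class explicitly, give the constraint that the exponents must satisfy (the $2$-adic and $3$-adic valuations $\eta_2$ and $\eta_3$ just introduced before the theorem statement must add up correctly: $\eta_2$ of the product of all the $(p_i-1)$ plus $(a-1)$ equals $k$, and similarly for $\eta_3$ plus $(b-1)$ equals $n$), and then show by the classification that every preimage falls into one of (i)--(iii) and that each class is nonempty for suitable $k,n$, completing the proof.

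The main obstacle I anticipate is making the trichotomy genuinely exhaustive and mutually delimited rather than merely illustrative: one must argue carefully that no preimage escapes these three shapes, which requires invoking Lemma~1 to kill primes of the form $2^m+1$ with $m$ not a power of two, and invoking the Pierpont structure to pin down which primes $>3$ can occur, then checking that squared odd primes other than $3$ cannot divide $t$ (the contradiction argument from the second half of Theorem~5's proof carries over: $p^2\mid t$ with $p\neq 2,3$ forces $p\mid\varphi(t)$, impossible). A secondary subtlety is the bookkeeping with $\eta_2,\eta_3$ and Property~1: one has to be explicit that multiplying a valid preimage by $2$ (when its $2$-part is odd or absent) produces another valid preimage, so the classes should be stated up to this doubling, exactly as in the $2^n$ case. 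Once these points are nailed down the rest is the same inductive/combinatorial counting as in Theorems~2--6, so I would not expect further difficulty.
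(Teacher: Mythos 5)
Your structural backbone is sound and is essentially what the paper's terse ``direct verification'' rests on: if $\varphi(t)=2^k3^n$ then $t=2^a3^b p_1\cdots p_r$ with distinct primes $p_i>3$ satisfying $p_i-1\mid 2^k3^n$, hence $p_i=2^c3^d+1$, each occurring to the first power (a square of a prime $p>3$ would force $p\mid\varphi(t)$, repeating the argument from the $2^n$ theorem), and the exponent bookkeeping is done with $\eta_2,\eta_3$ exactly as you indicate. The genuine gap is in the trichotomy itself. The paper cuts the classes by the number of prime factors of $t$ outside $\{2,3\}$: none ($t=2^y3^x$), exactly one ($t=2^y3^zp_1$), or a product $t=2^x3^y\prod_{i=1}^{L}p_i$ of primes of the form $2^{u_i}3^{v_i}+1$ (Fermat primes being the subcase $v_i=0$); with that cut exhaustiveness is immediate from the structural lemma. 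Your cut --- (i) pure $2^a3^b$, (ii) Pierpont primes with $d\ge 1$ supplying part of the $3$-power, (iii) a genuine power of $3$ together with such $d\ge 1$ primes --- is not exhaustive as stated: a preimage whose only primes beyond $\{2,3\}$ are of Fermat type ($d=0$) while $t$ still carries a power of $3$, for instance $t=2\cdot 3^2\cdot 5$ with $\varphi(t)=2^3\cdot 3$, falls into none of (i)--(iii). This is precisely the exhaustiveness issue you yourself flag as the ``main obstacle,'' and the sketch does not resolve it; the repair is simply to classify by the multiset of auxiliary primes $2^{u}3^{v}+1$ rather than by how the $3$-part of $\varphi(t)$ is generated. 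Two smaller remarks: Lemma~1 is not really needed here, since primes $2^c3^d+1$ with $d\ge1$ are admissible irrespective of the shape of the exponent (it only identifies the $d=0$ members as Fermat primes); and the bulk of the paper's proof is actually a combinatorial count (binomial-coefficient partition counts with inclusion--exclusion) of the class-3 preimages, which goes beyond the bare classification statement and which your proposal, reasonably, does not attempt.
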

\begin{enumerate}
  \item  If $t = {{2}^{y}}{{3}^{x}}$, then  $y\in \{1,...,k+1\}$,  $x\in N\cup \{0\}$.
  \item If $t = {{2}^{y}}{{3}^{z}}{{p}_{1}}$ and  the following conditions holds:

$\left\{ \begin{matrix}
   \left( x-1 \right)+1+~{{\text{ }\!\!\eta\!\!\text{ }}_{2}} \left(  \varphi({{p}_{i}})  \right)=k,  \\
   y-1+~{{\text{ }\!\!\eta\!\!\text{ }}_{3}}\left( \varphi ({{p}_{1}}) \right)=~n.  \\
\end{matrix} \right.$

  \item   If $t = {{2}^{x}}{{3}^{y}}\,\cdot \underset{i=1}{\overset{L}{\mathop{\prod{{{p}_{i}}}}}}\,$  where ${{p}_{i}}\in P$ such that  ${{p}_{i}}={{2}^{{{v_i}_{1}}}}{{3}^{{{v_i}_{2}}}}+1$ in particular $p_i$ can be prime Fermat number. For such numbers the following conditions hold: $x+\sum\limits_{i=1}^{L}{{{\eta }_{2}}(\varphi({{p}_{i}}))}=k$ and       $y-1 +\sum\limits_{i=1}^{L}{{{\eta }_{3}}(\varphi({{p}_{i}}))}= n$.
\end{enumerate}

\begin{proof}
Proof is carried out by direct verification. So the resulting power of two in $m$, from the item 3,  is the following   $(x-1)+1+\sum\limits_{i=1}^{L}{{{\eta }_{2}}(\varphi({{p}_{i}}))}=x+\sum\limits_{i=1}^{L}{{{\eta }_{2}}({{p}_{i}})}=k$. Also resulting power of 3 is such $y-1 +\sum\limits_{i=1}^{L}{{{\eta }_{3}}(\varphi ({{p}_{i}}))}= n$. On account of the set of primes in item 3 consists of number of form $${{p}_{i}}={{2}^{{{u}_{i}}}}{{3}^{{{v}_{i}}}}+1$$ let quantity of such number is $L$  and numbers in form of prime Fermat number $p_i=2^{2^i}+1$ let sum of such number is $F$, then $k-x-F$ can be splited in sum of number of primes of first form and second form. If we now $F$ for concrete $m$, then we express multiplicity of m as $C_{k-F+L-x-1}^{L-1}$.        

Recall that quantity of splits of form  ${{v}_{1}}+{{v}_{2}}+...+{{v}_{L}}=n-y+1$ can be counted by the formula   $C_{n-y+1+L-1}^{L-1}$. Consider the canonical decomposition of factor $\underset{i=1}{\overset{L}{\mathop{\prod{{{p}_{i}}}}}}\,$ from item 3.
The expression of the exact number of methods of splits of the numbers $n$ and $k$ (which appear in item 3) is reduced to finding the number of pairs of such vectors $({{u}_{L}},....,{{u}_{L}})$ and $({{v}_{1}},....,{{v}_{m}})$ that $\sum\limits_{i=1}^{L}{{{u}_{i}}}=k$ and $\sum\limits_{i=1}^{L}{{{v}_{i}}}=n$ with respect of
the columns of form



$$
\left(
  \begin{array}{c} \label{Vect}
    {{u}_{i}}  \\
    {{v}_{i}}  \\
  \end{array}
\right)
$$
 must be pairwise different.
 In this case, in the matrix of degrees 2 and 3 in the preimages, which has the form


 $$
 \left(
   \begin{array}{cccc}
     {v}_{0} & {v}_{1} & \cdots & {v}_{L} \\
     {u}_{0} & {u}_{1} & \cdots & {u}_{L} \\
   \end{array}
 \right)
 $$

where ${u}_{0}=x+F$, ${v}_{0}=y-1$.  Then the formula is calculated

\begin{equation} \label{C}
C_{k+L-F-y-1}^{L-1}, 						
\end{equation}


will give us the exact number of partitions. Also, when calculating the
number of combinations (\ref{C}) using the formula, we assume that the matrices are the same, which are obtained from each other by rearranging the columns even if these columns are different. Other words, the number of  splits $C_{k+L-F-y-1}^{L-1}$ considers different partitions where the parts can be rearranged, but in our case the parts are terms  and therefore nothing changes from the permutation.
Therefore, it necessary to divide by factorial $L!$
Thus, we obtain

\begin{equation} \label{CC}
\frac{C_{k+L-F-y-1}^{L-1}}{L!}.			
\end{equation}


Where $F$ are numbers in form of prime Fermat number $p_i = 2^{2^i}  +1$ let sum of such number is F. This formula counts distributions by all number of form ${{p}_{i}}={{2}^{{{u}_{i}}}}{{3}^{{{v}_{i}}}}+1$  but not all of them are prime, consequently we obtain an upper bound.

The number of methods to choose the exponent $x$  we note by $X$.  Thus we have $\frac{C_{k+L-F-y-1}^{L-1}}{L!}X$.

Also the formula (\ref{C}) distinguish splits with a same values of $(u_i, v_i)$ and $(u_j, v_j)$ where $u_j=u_i$ and $v_j=v_i$.  Then we have to apply exception-inclusion formula. According to this formula we have to subtract a number of intersections of  sets with the same columns of powers $(u_i, v_i)$.

The exact number of partitions is estimated below by the exception inclusion formula. If we stop at negative summand we shall get a a lower bound.

 \[I = \left| U-\bigcup\limits_{\begin{smallmatrix}
 i,j=1 \\
 i<j
\end{smallmatrix}}^{L}{{{M}_{ij}}} \right|,  \] where $U$ is the set of all distributions  of   $k-F-y$ into  $L$ terms, we denote it by $I$.

Let us present the equations of powers form item 3) in form of  two equations

\begin{equation}\label{split}
 \begin{cases}
   {{x}_{0}}+{{x}_{1}}+...+{{x}_{L}}=k,
   \\
   {{y}_{0}}+{{y}_{2}}+...+{{y}_{L}}=n,
    \end{cases}
\end{equation}

where ${{x}_{0}}=x+F, \, {x}_{i}=u_i,\,{{y}_{1}}=v_i$ are defined in the item 3),
we count coinciding of columns (pairs) of form ${{x}_{i}}={{x}_{j}},\,\,{{y}_{i}}={{y}_{j}}$.

Let $U$ is set of all splits of form (\ref{split}) that starts from $i=1$
\[\left| U-\bigcup\limits_{\begin{smallmatrix}
 i,j=1 \\
 i<j
\end{smallmatrix}}^{L}{{{M}_{ij}}} \right|\le \left| U \right|-\sum\limits_{i=1}^{L-1}{\sum\limits_{j=i+1}^{L}{\left| {{M}_{ij}} \right|}}+\sum\limits_{i=1}^{L-1}{\sum\limits_{\begin{smallmatrix}
 \,\,\,\,\,\,\,\,\,\,{{j}_{1}}={{i}_{1}}+1 \\
 ({{i}_{1}},{{j}_{1}})\ne ({{i}_{2}},{{j}_{2}})
\end{smallmatrix}}^{L}{\sum\limits_{{{i}_{2}}=1}^{L-1}{\sum\limits_{{{j}_{2}}={{i}_{2}}+1}^{{}}{\left| {{M}_{{{i}_{1}}{{j}_{1}}}}\cap {{M}_{{{i}_{2}}j}}_{_{2}} \right|}}}}-...\]

For each pair of powers $\left( i,j \right):\,\,\,\left| {{M}_{ij}} \right|=\sum\limits_{k=0}^{[\frac{k}{2}]}{\sum\limits_{l=0}^{[\frac{n}{2}]}{C_{x-2k+m-3}^{m-3}C_{y-2l+m-3}^{m-3}}}$, where $k,\,l$  are values of powers of  2 and 3 in pairs where these values  $k,\,l$  are coincided.  Therefore to count exact value we should to substitute 	cardinal of intersection.
Further we should count power set of intersections $\left| {{M}_{ij}}\cap {{M}_{pq}} \right|$ by the similar principle.
 The prove is completed.
\end{proof}
\begin{corollary}
The upper bound of the preimages of $m$ in $\varphi (t)=m$ is
$$\frac{C_{k+L-F-y-1}^{L-1} - I}{L!}X.$$
 \end{corollary}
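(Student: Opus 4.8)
The statement is an immediate consequence of the combinatorial count performed in the proof of Theorem~7, and my plan is simply to package that count into one closed form. First I would invoke Theorem~7: every $t$ with $\varphi(t)=2^{k}3^{n}$ lies in one of the three families, and the generic one is $t=2^{x}3^{y}\prod_{i=1}^{L}p_{i}$ with the $p_{i}$ pairwise distinct primes of the form $p_{i}=2^{u_{i}}3^{v_{i}}+1$, where $F$ accounts for the contribution of the Fermat-prime factors to the exponent of $2$. Counting such $t$ is the same as counting the admissible exponent matrices
$$
\left(
\begin{array}{cccc}
v_{0} & v_{1} & \cdots & v_{L} \\
u_{0} & u_{1} & \cdots & u_{L}
\end{array}
\right),
\qquad u_{0}=x+F,\ \ v_{0}=y-1,
$$
whose row sums are prescribed by $n$ and $k$ and whose columns are pairwise distinct, since two equal columns would force $p_{i}=p_{j}$.

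I would then count these matrices in three successive passes. \emph{Pass one:} forgetting both the distinctness of the columns and their order, the stars-and-bars count for the system, after fixing the zeroth column, produces the binomial coefficient $C_{k+L-F-y-1}^{L-1}$ appearing in the statement; this over-counts. \emph{Pass two:} to delete the matrices with two coinciding columns I apply inclusion--exclusion over the events $M_{ij}=\{\text{columns }i\text{ and }j\text{ coincide}\}$, exactly as in the proof of Theorem~7, the aggregate correction being the quantity $I$; truncating this Bonferroni expansion at a negative summand keeps the estimate on the safe side. \emph{Pass three:} every matrix surviving Pass two has pairwise distinct columns, so $S_{L}$ acts freely on it by permuting the columns $1,\dots,L$, hence each unordered product $\prod p_{i}$ is recorded exactly $L!$ times and we divide by $L!$. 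Multiplying by the number $X$ of admissible values of the free exponent $x$, and observing that relaxing ``$2^{u_{i}}3^{v_{i}}+1$ prime'' to ``$2^{u_{i}}3^{v_{i}}+1$ arbitrary'' can only enlarge the count, I obtain that $\frac{C_{k+L-F-y-1}^{L-1}-I}{L!}\,X$ bounds the number of preimages of $m$ from above, which is the claim.

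The step I expect to be the real obstacle is ensuring the three passes are mutually compatible: the division by $L!$ is legitimate only \emph{after} the repeated-column configurations have been excised, so the order ``subtract $I$, then divide by $L!$, then multiply by $X$'' is essential and must be justified rather than assumed, and one must also check that the truncated inclusion--exclusion term $I$ is taken with the sign that preserves the inequality rather than reversing it. A secondary point to settle is how families~1 and~2 of Theorem~7, which correspond to the degenerate ranges of $L$, are folded in, so that the single formula is legitimately an upper bound for the full multiplicity of $m$ and not merely for the contribution of the generic family.
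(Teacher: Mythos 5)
Your argument is essentially the paper's own: the corollary there rests on exactly your scheme --- the stars-and-bars count $C_{k+L-F-y-1}^{L-1}$, the inclusion--exclusion over the coincidence events $M_{ij}$, the division by $L!$, and the factor $X$, all inherited from the proof of the preceding theorem --- with the ``upper bound'' status justified, as in your final observation, by the single remark that the count runs over all numbers of the form $2^{u_i}3^{v_i}+1$ whether or not they are prime. The caveats you flag (the order of the three passes, the sign of the truncated correction, how classes 1 and 2 are absorbed, and indeed the fact that the paper literally defines $I$ as $\left| U-\bigcup_{i<j} M_{ij} \right|$ rather than as the subtracted correction, which makes the displayed formula ambiguous) are all points the source leaves unaddressed, so your reconstruction is, if anything, more careful than the paper's own one-line proof.
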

This bound is upper because formula (\ref{CC}) count all numbers of form ${{p}_{i}}={{2}^{{{u}_{i}}}}{{3}^{{{v}_{i}}}}+1$ but not all of them are prime.
\begin{example}
Number of the preimages for $m=2$ is 3 because $\varphi(3)=\varphi(4)=\varphi(6)=2 $ this preimages belongs to first case. Also $ \varphi(5) = \varphi(12)=\varphi(8)=\varphi(10)=2^2 $ this preimages belongs to 3) and 1) cases.
\end{example}

The calculation shows that cardinal of preimages set depends of $h = 2^{\eta_2(m)}, {\eta_2(m)}>1$ and ${\eta_2(m)}, \, {\eta_3(m)>1}$ for investigation this dependence we propose new metric, which we dedicate to the Academic Pratzyvyty M.

$$\rho (h,g)=\left| ln\frac{h(1-g)}{g(1-h)} \right|, \,\, h,g>1,$$
where in general case instead 1 it can be $c=const$ and $h,g>c>0$ (or $c >h,g>0$).
We shortly prove inequality of triangle. If all logarithm not negative then
$ln(\frac{x(1-y)}{y(1-x)}) \le ln(\frac{x(1-z)}{z(1-x)}) + ln(\frac{z(1-y)}{y(1-z)})$
since logarithm is monotonic and continual function, then, after adding the logarithms in left part,  we proceed to
$\frac{x(1-y)}{y(1-x)} \le \frac{x(1-z) z(1-y)}{z(1-x) y(1-z)}=\frac{x(1-y)}{y(1-x)}$. Therefore we prove the equality in this case.
Other one difficult case if one of logarithms, for instance $ln(\frac{z(1-y)}{y(1-z)})$, is negative then $|ln(\frac{x(1-y)}{y(1-x)})|= |ln(\frac{x(1-z)}{z(1-x)}\frac{z(1-y)}{y(1-z)})| =| ln(\frac{x(1-z)}{z(1-x)}) + ln(\frac{z(1-y)}{y(1-z)})|  \le | ln(\frac{x(1-z)}{z(1-x)})| + |ln(\frac{z(1-y)}{y(1-z)})|.$
Thus, inequality holds. Property $\rho(g,h)= \rho(h,g)$ is true due to modulo and property of logarithm.

We will describe this dependence of $g= 2^{\eta_2(m)}, h =3^{\eta_3(m)}$ in the next work.

\end{section}

\section{Conclusion}
The analytic expression for exact multiplicity of inverses for $m = {{2}^{{{2}^{n}}+a}}$, where $a\in N$, $a<{{2}^{n}}$ and $ \varphi(t)=m$ was obtained. As it turned out, it depends on the number of prime numbers Fermat.
Upper and lower bounds for arbitrary number $n$ were obtained by us. The method of constructing of preimages set for obtained by us lower bound was proposed by us.







\begin{thebibliography}{9}

\bibitem{Luca}
\emph {Michal Kevek, Florian Luca, Lawrence Somer} 17 Lectures on Fermat Numbers: From Number Theory  to Geometry, Springer, CMS Books 9, ISBN 0-387-95332-9.

\bibitem {Rodney} \emph {\it Rodney Coleman} On the image of Euler's totient function.  Journal of Computer mathematics Sci. (2012), Vol.3 (2), P. 185-189.

\bibitem{SkVor}	  \emph {Ruslan Skuratovskii,} "The investigation of Euler's totient function preimages" Sixth International Conference on Analytic Number Theory and Spatial Tessellations. Voronoy Conference" Book of abstracts. P. 37-39. http://conference.imath.kiev.ua/index.php/voronoi/2018/index/index
http://conference.imath.kiev.ua/index.php/voronoi/2018/paper/view/60

\bibitem{Sk} \emph {R. V. Skuratovskii,} {\it Involutive irreducible generating sets and structure of sylow 2-subgroups of alternating groups.} Romai journal. (2017), Vol. 13 Issue 1, P. 117-139.


\bibitem{Ford}	 \emph {Kevin Ford,} The Number of Solutions of  $\varphi(x) = m$  Annals of Mathematics, Second Series, Vol. 150, No. 1    (1999), P. 283-312.


\bibitem{Vinogradov}  \emph {Ivan Vinogradov,}
Elements of Number Theory Dover Publications, 5th ed. 2016. P. 236.


\bibitem{SkAr} \emph {R. V. Skuratovskii,} Structure and minimal generating sets of Sylow 2-subgroups of alternating groups. Source: https://arxiv.org/abs/1702.05784v2

\bibitem{RSA}
\emph { Bakhtiari M., Maarof M. A.} Serious Security Weakness in RSA Cryptosystem // IJCSI — 2012. — Vol. 9, Iss. 1, No 3. — P. 175–178. — ISSN 1694-0814; 1694-0784

\bibitem{Max}

\emph { Max A. Alekseyev.}
Computing the Inverses, their Power Sums,
and Extrema for Euler’s Totient and
Other Multiplicative Functions. Journal of Integer Sequences, Vol. 19 (2016), Article 16.5.2, P. 11-21.









\end{thebibliography}
\end{document}